\newtheorem{theorem}{\textsf{Theorem}}
\newtheorem{lemma}{\textsf{Lemma}}
\newtheorem{remark}{\textsf{Remark}}
\newenvironment{proof}[1][\textsf{Proof}]{\textbf{#1}}{$\square$}
\def\text{\hbox} 
\begin{document}

\title{
\date{ }
{
\large \textsf{\textbf{Spherical structures on torus knots and links}}
\thanks{The work is performed under auspices of the Swiss National Science Foundation no.~200020-113199/1,  ``Scientific Schools''-5682.2008.1 and RFBR no.~06-01-00153.}}}
\author{\small Alexander Kolpakov, Alexander Mednykh}
\maketitle

\begin{abstract}\noindent
The present paper considers two infinite families of cone-manifolds endowed with spherical metric. The singular strata is either the torus knot ${\rm t}(2n+1, 2)$ or the torus link ${\rm t}(2n, 2)$. Domains of existence for a spherical metric are found in terms of cone angles and volume formul{\ae} are presented.

\medskip\noindent
{\textsf{\textbf{Key words}}: Spherical geometry, cone-manifold, knot, link.}
\end{abstract}


\parindent=0pt

\section{Introduction}

A three-dimensional cone-manifold is a metric space obtained from a collection of disjoint simplices in the space of constant sectional curvature $k$ by isometric identification of their faces in such a combinatorial fashion that the resulting topological space is a manifold (also called the underlying space for a given cone-manifold).

Such the metric space inherits the metric of sectional curvature $k$ on the union of its 2- and 3-dimensional cells. In case $k=+1$ the corresponding cone-manifold is called spherical (or admits a spherical structure). By analogy, one defines euclidean ($k=0$) and hyperbolic ($k=-1$) cone-manifolds.

The metric structure around each 1-cell is determined by a cone angle that is the sum of dihedral angles of corresponding simplices sharing the 1-cell under identification. The singular locus of a cone-manifold is the closure of all its 1-cells with cone angle different from $2\pi$. For the further account we suppose that every component of the singular locus is an embedded circle with constant cone angle along it.

A particular case of cone-manifold is an orbifold with cone angles $2\pi/m$, where $m$ is an integer (cf.~\cite{T2}).

The present paper considers two infinite families of cone-manifolds with underlying space the three-dimensional sphere $\mathbb{S}^3$. The first family consists of cone-manifolds with singular locus the torus knot ${\rm t}(2n+1, 2)$ with $n\geq1$. In the rational census \cite{Ro} these knots are denoted by $(2n+1)/1$. The second family of cone-manifolds consists of those with singular locus a two-component torus link ${\rm t}(2n, 2)$ with $n\geq2$. These links are two-bridge and correspond to the links $2n/1$ in the rational census. The simplest examples of such the knots and links are the trefoil knot $3/1$ and the link $4/1$. In the Rolfsen table \cite{Ro} one finds them as the knot $3_1$ and the link $4_1^2$.

By the Theorem of W.~Thurston \cite{T1}, the manifold $\mathbb{S}^3\backslash3_1$ does not admit a hyperbolic structure.  However, it admits two other geometric structures \cite{N}: $\mathbb{H}^2 \times \mathbb{R}$ and $\widetilde{\rm{PSL}}(2,\mathbb{R})$. It follows from the paper \cite{SW} that the spherical dodecahedron space (i.e. Poincar\'e homology sphere) is a cyclic $5$-fold covering of $\mathbb{S}^3$ branched over $3_1$. Thus, the orbifold $3_1(\frac{2\pi}{5})$ with singular locus the trefoil knot and cone angle $\frac{2\pi}{5}$ is spherical. Due to the Dunbar's census \cite{D}, orbifold $3_1(\frac{2\pi}{n})$ is spherical if $n\leq5$, Nil-orbifold if $n=6$ and $\widetilde{\rm{PSL}}(2,\mathbb{R})$-orbifold if $n\geq7$. Spherical structure on the cone-manifold $3_1(\alpha)$ with underlying space the three-dimensional sphere $\mathbb{S}^3$ is studied in \cite{DMM}.

The consideration of two-bridge torus links is carried out starting with the simplest one possessing non-abelian fundamental group, namely $4^2_1$.

The previous investigation on spherical structures for cone-manifolds is carried out mainly in the papers \cite{HLM,MR2,Po}. The present paper develops a method to analyse existence of a spherical metric for two-bridge torus knot and link cone-manifolds. Also, the lengths of singular geodesics are calculated and the volume formul{\ae} are obtained (cf. Theorem~\ref{tor_knot} and Theorem~\ref{tor_link}).

\section{Projective model $\mathbb{S}^3_{\lambda}$}

The purpose of the present section is to construct the projective model $\mathbb{S}^3_{\lambda}$ that one can use to study geometric properties of two-bridge torus knots and links and to build up holonomy representation for the corresponding cone-manifolds. Other projective models for homogeneous geometries are described in \cite{Mo}.

Consider the set $\mathbb{C}^2=\{(z_1,\,z_2):z_1,
z_2\in\mathbb{C}\}$ as a four-dimensional vector space over $\mathbb{R}$. We denote it by $\mathbb{C}^2_\mathbb{R}$ and equip with Hermitian product
$$
{\langle(z_1,\,z_2),\,(w_1,\,w_2)\rangle}_{\rm{H}}=(z_1,\,z_2)\mathcal{H}\,\overline{(w_1,\,w_2)}^T,
$$
where
$$\mathcal{H}=\left(\begin{array}{ccc}1 & \lambda  \\\lambda & 1 \end{array}\right)$$ is a symmetric matrix with $-1\,<\,\lambda\,<\,+1$.

The natural inner product is associated to the Hermitian form above:
$$
\langle(z_1,\,z_2),\,(w_1,\,w_2)\rangle = {\rm Re}\,{\langle(z_1,\,z_2),\,(w_1,\,w_2)\rangle}_{\rm{H}}
$$
and the respective norm is
$$
\| (z_1, z_2) \| = |z_1|^2 + |z_2|^2 + \lambda (z_1 \overline{z}_2 + \overline{z}_1 z_2).
$$

Call two elements $(z_1,\,z_2)$ and $(w_1,\,w_2)$ in $\overset{\circ}{\mathbb{C} _{\mathbb{R}}^2}=\mathbb{C}^2_{\mathbb{R} } \,{\backslash}\,(0,\,0) $ equivalent if there is $\mu > 0$ such that $(z_1,\,z_2)=(\mu \,w_1,\,\mu\,w_2)$. We denote this equivalence relation as $(z_1,\,z_2)\sim(w_1,\,w_2).$

Identify the factor-space $\overset{\circ}{\mathbb{C} _{\mathbb{R}}^2}/\sim$ with the three-dimensional sphere $$\mathbb{S}^3_{\lambda}=\{ (z_1,\,z_2)\in \mathbb{C} _{\mathbb{R}}^2:\|(z_1,\,z_2)\| =1\},$$ endowed with the Riemannian metric
$$
{\rm d}s_{\lambda}^2=|{\rm dz_1}|^2+|{\rm dz_2}|^2+\lambda({\rm dz_1}{\rm d\overline{z}_2}+{\rm d\overline{z}_1}{\rm dz_2}).
$$

By means of equality
$$ {\rm d}s_{\lambda}^2=\frac{1+\lambda}2\,|{\rm d}z_1+{\rm d}z_2|^2+\frac{1-\lambda}2\,|{\rm d}z_1-{\rm d}z_2|^2,$$ the linear transformation
$$\xi_1=\sqrt{\frac{1+\lambda}2}\,(z_1+ z_2),\,\,\,\xi_2=\sqrt{\frac{1-\lambda}2}\,(z_1-z_2)$$ provides an isometry between $(\mathbb{S}^3_{\lambda},\,{\rm d}s_{\lambda}^2)$ and $(\mathbb{S}^3,\,{\rm d}s^2),$ where ${\rm d}s^2=|{\rm d\xi_1}|^2+|{\rm d\xi_2}|^2$ is the standard metric of sectional curvature $+1$ on the unit sphere $\mathbb{S}^3=\{(\xi_1,\,\xi_2)\in \mathbb{C}^2: |\xi_1|^2+ |\xi_2| ^2  = 1\}.$

Let $P,Q$ be two points in $\mathbb{S}^{3}_{\lambda}.$ The spherical distance between $P$ and $Q$ is a real number $d_{\lambda}(P,Q)$ that is uniquely determined by the conditions $0\leq d_{\lambda}(P,Q)\leq \pi $ and $\cos d_{\lambda}(P,Q) = \langle P,Q \rangle.$

\section{Torus knots $\mathbb{T}_n$}

Let $\mathbb{T}_n, n\geq1$ be the torus knot ${\rm t}(2n+1, 2)$ embedded in $\mathbb{S}^3$. The knot $\mathbb{T}_n$ is the two-bridge knot $(2n+1)/1$ in the rational census (Fig.~\ref{Fig1}). Let $\mathbb{T}_n(\alpha)$ denote a cone-manifold with singular locus $\mathbb{T}_n$ and the cone angle $\alpha$ along it.

\begin{figure}[ht]
\begin{center}
\includegraphics* [totalheight=4cm]{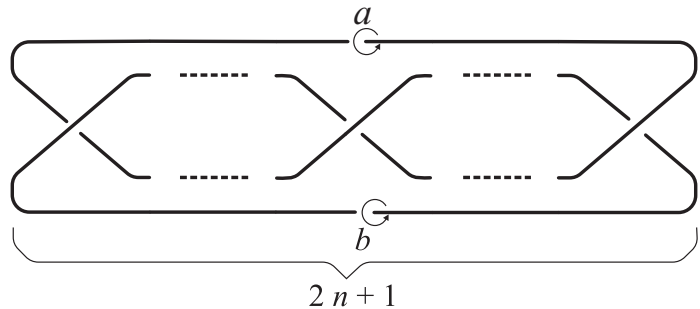}
\end{center}
\caption{Knot $(2n+1)/1$} \label{Fig1}
\end{figure}

The aim of the present section is to investigate cone-manifolds $\mathbb{T}_n(\alpha)$, $n\geq1$ to find out the domain of sphericity in terms of the cone angle and to derive the volume formul{\ae}.

Two lemmas precede the further exposition:

\begin{lemma} \label{isometry_lemma1}
For every $0<\alpha<2\pi$ and $-1<\lambda<+1$
the linear transformations $$A=\left(\begin{array}{ccc}1 & 0  \\-2\,i\,e^{i \frac{\alpha}{2}}\lambda\,\sin\frac{\alpha}{2} &e^{i\alpha} \end{array}\right)$$ and
$$B=\left(\begin{array}{ccc}e^{i\alpha}& -2\,i\,e^{i \frac{\alpha}{2}}\lambda\,\sin\frac{\alpha}{2}  \\ 0 & 1 \end{array}\right)$$
are isometries of $\mathbb{S}^3_{\lambda}$.
\end{lemma}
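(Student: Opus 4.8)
The plan is to reduce the statement to a single matrix identity together with a short computation. Recall from the previous section that $\mathrm{d}s_{\lambda}^2$ is the restriction to the unit sphere of the flat Riemannian metric on $\mathbb{C}^2_{\mathbb{R}}\cong\mathbb{R}^4$ whose associated quadratic form is $\|\cdot\|$, i.e. the real part of the Hermitian product $\langle\cdot,\cdot\rangle_{\mathrm H}$ given by $\mathcal H$ (this form is positive definite since $|\lambda|<1$). Hence a complex-linear map of $\mathbb{C}^2_{\mathbb{R}}$ that is invertible and preserves $\langle\cdot,\cdot\rangle_{\mathrm H}$ automatically preserves $\|\cdot\|$, maps $\mathbb{S}^3_{\lambda}$ onto itself, and restricts there to an isometry. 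Both $A$ and $B$ are invertible, with $\det A=\det B=e^{i\alpha}\neq 0$. With the action written on row vectors as $v\mapsto vA$, preservation of the Hermitian product is precisely the identity $A\,\mathcal H\,\overline{A}^{\,T}=\mathcal H$, and likewise $B\,\mathcal H\,\overline{B}^{\,T}=\mathcal H$ for $B$. So it remains to verify these two identities.

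The first step is a cosmetic simplification of the off-diagonal entry. Since $-2i\sin\frac{\alpha}{2}=e^{-i\alpha/2}-e^{i\alpha/2}$, one gets $-2\,i\,e^{i\alpha/2}\lambda\sin\frac{\alpha}{2}=\lambda\,(1-e^{i\alpha})$; write $c$ for this common value, so that $A=\left(\begin{array}{cc}1&0\\c&e^{i\alpha}\end{array}\right)$ and $B=\left(\begin{array}{cc}e^{i\alpha}&c\\0&1\end{array}\right)$, with $\overline c=\lambda(1-e^{-i\alpha})$ and $|c|^2=\lambda^2|1-e^{i\alpha}|^2=2\lambda^2(1-\cos\alpha)$. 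Now I would multiply out $A\,\mathcal H\,\overline{A}^{\,T}$ entrywise. Three of the four entries collapse immediately: the $(1,1)$ entry is $1$, the $(1,2)$ entry is $\overline c+\lambda e^{-i\alpha}=\lambda$, and the $(2,1)$ entry is $c+\lambda e^{i\alpha}=\lambda$. The only entry needing a genuine cancellation is the $(2,2)$ one, which works out to $|c|^2+\lambda\,(e^{i\alpha}\overline c+e^{-i\alpha}c)+1$; here $e^{i\alpha}\overline c+e^{-i\alpha}c=2\,\mathrm{Re}\,\lambda(e^{i\alpha}-1)=-2\lambda(1-\cos\alpha)$, so the first two terms cancel and the entry equals $1$. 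Thus $A\,\mathcal H\,\overline{A}^{\,T}=\mathcal H$. For $B$ one may either repeat the computation verbatim with the two coordinates exchanged, or simply note that $B=PAP$, where $P=\left(\begin{array}{cc}0&1\\1&0\end{array}\right)$ interchanges $z_1$ and $z_2$; since $P$ manifestly preserves $\|\cdot\|$ and satisfies $P\mathcal H P=\mathcal H$, the identity for $B$ follows from that for $A$.

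There is no real conceptual obstacle: the statement is essentially a verification. The two points that demand care are (i) fixing the convention for how the matrices act, so that the isometry condition is genuinely $A\,\mathcal H\,\overline A^{\,T}=\mathcal H$ rather than $\overline A^{\,T}\mathcal H A=\mathcal H$ (the latter fails already in the $(1,1)$ entry), and (ii) pushing the $(2,2)$-entry cancellation through without sign errors. An alternative, slightly longer route would be to conjugate $A$ and $B$ by the explicit linear isometry $\xi_1=\sqrt{\tfrac{1+\lambda}{2}}\,(z_1+z_2)$, $\xi_2=\sqrt{\tfrac{1-\lambda}{2}}\,(z_1-z_2)$ onto the standard $\mathbb{S}^3$ and check that the resulting $2\times 2$ matrices are unitary; this reproves the lemma but offers no saving over the direct computation.
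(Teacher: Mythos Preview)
Your proof is correct and follows essentially the same route as the paper: reduce being an isometry of $\mathbb{S}^3_\lambda$ to the matrix identity $L\,\mathcal H\,\overline L^{\,T}=\mathcal H$ and then verify it for $L=A,B$. You are simply more explicit than the paper (which just says ``one verifies''), and your rewriting $-2ie^{i\alpha/2}\lambda\sin\tfrac{\alpha}{2}=\lambda(1-e^{i\alpha})$ together with the observation $B=PAP$ for $P=\left(\begin{smallmatrix}0&1\\1&0\end{smallmatrix}\right)$ are clean touches that make the check painless.
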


\begin{proof}. \label{proof_isometry_lemma1}
For the further account let us assume that the multiplication of vectors by matrices is to the right. A linear transformation $L$ of the space $\mathbb{C}^2_\mathbb{R}$ preserves the corresponding Hermitian form if and only if for every pair of vectors $P,Q\in\mathbb{C}^2_{\mathbb{R}}$ it holds that
$${\langle P,\,Q \rangle}_{\rm{H}}=P\mathcal{H}\,\overline{Q}^T=PL\mathcal{H}\overline{L}^T\overline{Q}^T={\langle P L,\,Q L\rangle}_{\rm{H}}.$$
The condition above is equivalent to
$$\mathcal{H}=L\mathcal{H}\overline{L}^T.$$

In particular,
$$\cos d_{\lambda}(P,Q)=\langle P,\,Q \rangle=\langle P L,\,Q L \rangle=\cos d_{\lambda}(P L,Q L),$$
that means $L$ preserves the spherical distance between $P$ and $Q$.

Let $L=A$ and $L=B$ in series, one verifies that $A$ and $B$ preserve the Hermitian norm on $\mathbb{C}^2_\mathbb{R}$ and, consequently, the spherical distance on $\mathbb{S}^3_{\lambda}$.
\end{proof}

\begin{lemma} \label{polynomial_lemma1}
Let $A$ and $B$ be the same matrices as in the affirmation of Lemma~\ref{isometry_lemma1}.
Then for all integer $n\geq1$ one has
$$
(AB)^nA-B(AB)^n\,=\,2\,U_{2n}(\Lambda)\,e^{i\,\frac{(2n+1)(\pi+\alpha)}{2}}\sin\frac{\alpha}{2}\,M,
$$where $M$ is a non-zero $2\times2$-matrix and $U_{2n}(\Lambda)$ is the second kind Chebyshev polynomial of power $2n$ in variable $\Lambda\,=\,\lambda\,\sin\frac{\alpha}{2}$.
\end{lemma}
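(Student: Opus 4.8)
The plan is to kill all high powers of $AB$ via Cayley--Hamilton, reducing the left-hand side to a scalar combination of the ``short words'' $ABA-BAB$ and $A-B$, and then to recognise the resulting scalar as a Chebyshev polynomial in $\Lambda$. I abbreviate the common off-diagonal entry of $A$ and $B$ by $c=-2i\,e^{i\alpha/2}\lambda\sin\frac{\alpha}{2}$, so that $A=\bigl(\begin{smallmatrix}1&0\\c&e^{i\alpha}\end{smallmatrix}\bigr)$ and $B=\bigl(\begin{smallmatrix}e^{i\alpha}&c\\0&1\end{smallmatrix}\bigr)$; the one relation used repeatedly is $c^{2}=-4e^{i\alpha}\Lambda^{2}$, where $\Lambda=\lambda\sin\frac{\alpha}{2}$.

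First I would compute $\det(AB)=e^{2i\alpha}$ and $\tr(AB)=2e^{i\alpha}+c^{2}=2e^{i\alpha}(1-2\Lambda^{2})$. Writing $AB=e^{i\alpha}Y$ with $\det Y=1$ and $\tr Y=2(1-2\Lambda^{2})$, the standard Cayley--Hamilton consequence ``$Y^{k}=U_{k-1}(x)\,Y-U_{k-2}(x)\,I$ for a unimodular $2\times2$ matrix with $\tr Y=2x$'' (itself proved by the Chebyshev recursion $U_{k}=2xU_{k-1}-U_{k-2}$, $U_{-1}=0$, $U_{0}=1$) gives
\[
(AB)^{n}=e^{i(n-1)\alpha}U_{n-1}(x)\,(AB)-e^{in\alpha}U_{n-2}(x)\,I,\qquad x:=1-2\Lambda^{2}.
\]
Substituting this into $(AB)^{n}A-B(AB)^{n}$, the $I$-terms contribute $-e^{in\alpha}U_{n-2}(x)(A-B)$ and the $(AB)$-terms contribute $e^{i(n-1)\alpha}U_{n-1}(x)(ABA-BAB)$, so the whole expression is a scalar combination of $A-B$ and $ABA-BAB$ only.

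The two facts that make everything collapse are elementary but essential. A direct $2\times2$ multiplication shows
\[
ABA-BAB=(e^{i\alpha}+c^{2})(A-B)=e^{i\alpha}(1-4\Lambda^{2})(A-B),
\]
and, using $1-e^{i\alpha}=-2i\,e^{i\alpha/2}\sin\frac{\alpha}{2}$, that $A-B=2i\,e^{i\alpha/2}\sin\frac{\alpha}{2}\cdot M$ with $M=\bigl(\begin{smallmatrix}-1&\lambda\\-\lambda&1\end{smallmatrix}\bigr)$, a nonzero constant matrix. Plugging these in, the bracket becomes $e^{in\alpha}\bigl[(1-4\Lambda^{2})U_{n-1}(x)-U_{n-2}(x)\bigr]$ times $2i\,e^{i\alpha/2}\sin\frac{\alpha}{2}\,M$. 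It remains to prove the Chebyshev degree-doubling identity
\[
(1-4\Lambda^{2})\,U_{n-1}(1-2\Lambda^{2})-U_{n-2}(1-2\Lambda^{2})=(-1)^{n}\,U_{2n}(\Lambda),
\]
which I would verify by the substitution $\Lambda=\sin\phi$: then $1-2\Lambda^{2}=\cos2\phi$, $1-4\Lambda^{2}=2\cos2\phi-1$, and using $U_{k}(\cos\theta)=\sin((k+1)\theta)/\sin\theta$ together with product-to-sum formulas both sides reduce to $\cos((2n+1)\phi)/\cos\phi$. Finally, collecting the scalars $e^{in\alpha}$, $(-1)^{n}$, $2i\,e^{i\alpha/2}$, $\sin\frac{\alpha}{2}$ and using $(-1)^{n}i=e^{i(2n+1)\pi/2}$ and $e^{in\alpha}e^{i\alpha/2}=e^{i(2n+1)\alpha/2}$, the prefactor becomes exactly $2U_{2n}(\Lambda)\,e^{i(2n+1)(\pi+\alpha)/2}\sin\frac{\alpha}{2}$, which is the claimed identity with the explicit nonzero $M$ above.

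I expect the only genuine content to be the degree-doubling Chebyshev identity, and the only real danger to be the phase bookkeeping --- keeping track of the $e^{i\alpha/2}$, the factor $i$, and the sign $(-1)^{n}$ so as to land precisely on $e^{i(2n+1)(\pi+\alpha)/2}$ rather than on some other square root of $e^{i(2n+1)\alpha}$. An alternative, slightly more pedestrian route avoids the closed form: induct on $n$, using the characteristic equation $(AB)^{2}=2e^{i\alpha}(1-2\Lambda^{2})(AB)-e^{2i\alpha}I$ to re-express $(AB)^{n+1}A-B(AB)^{n+1}$ in terms of the cases $n$ and $n-1$ and match the recursion against that for $U_{2n}$; but the computation above is cleaner and also produces $M$ explicitly.
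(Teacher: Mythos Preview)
Your argument is correct and the matrix $M=\bigl(\begin{smallmatrix}-1&\lambda\\-\lambda&1\end{smallmatrix}\bigr)$ you produce is exactly the one the paper finds. The approaches, however, are genuinely different. The paper substitutes $\Lambda=\cos\theta$, then \emph{explicitly diagonalises} $AB$ via a conjugating matrix $V$ with $V^{-1}(AB)V=\mathrm{diag}(-e^{i\alpha}e^{2i\theta},\,-e^{i\alpha}e^{-2i\theta})$, so that $(AB)^n=VD^nV^{-1}$ is computed in closed form and the factor $\sin(2n+1)\theta/\sin\theta=U_{2n}(\cos\theta)$ drops out of the entries directly, with no intermediate Chebyshev identity needed. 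You instead avoid any eigenvector computation by invoking Cayley--Hamilton to write $(AB)^n$ as a Chebyshev-weighted combination of $AB$ and $I$ in the half-angle variable $x=1-2\Lambda^{2}$, reduce everything to the short words $ABA-BAB$ and $A-B$, and then pay the price of proving the degree-doubling identity $(1-4\Lambda^{2})U_{n-1}(x)-U_{n-2}(x)=(-1)^{n}U_{2n}(\Lambda)$ separately. The diagonalisation route is shorter and makes the appearance of $U_{2n}$ transparent from the eigenvalues; your route is eigenvector-free and isolates cleanly the two algebraic facts $ABA-BAB=(e^{i\alpha}+c^{2})(A-B)$ and $A-B=2ie^{i\alpha/2}\sin\frac{\alpha}{2}\,M$, which would generalise more easily to other two-generator relations. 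Your phase bookkeeping is correct.
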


\begin{proof}.
As far as $-1<\lambda<+1$, one obtains
$$-1\,<\,\Lambda\,=\,\lambda\,\sin\frac{\alpha}{2}\,<\,+1.$$
Substitute
$$\Lambda\,=\,\cos\theta,$$
with the unique $0\,<\,\theta\,<\,\pi$.

Then matrices $A$ and $B$ are rewritten in the form
$$ A=\left(\begin{array}{ccc}1 & 0  \\-2\,i\,e^{i \frac{\alpha}{2}}\cos\theta &e^{i\alpha} \end{array}\right), $$
$$ B=\left(\begin{array}{ccc}e^{i\alpha}& -2\,i\,e^{i \frac{\alpha}{2}}\cos\theta  \\ 0 & 1 \end{array}\right). $$

On purpose to diagonalize the matrix $AB$, use
$$ V=\left(\begin{array}{ccc}i\,e^{-i\,\frac{\alpha}{2}}\,e^{-i\theta} &  i\,e^{-i\,\frac{\alpha}{2}}\,e^{i\theta} \\1&1
\end{array}\right), $$
and obtain
$$ D\,=\,V^{-1}(AB)V\,=\,\left(\begin{array}{ccc}-e^{i\alpha}\,e^{2 i\theta} &0 \\0 &-e^{i\alpha}\,e^{-2 i\theta}\end{array}\right).$$

Note, that $V$ might be not an isometry, but it is utile for computation.

Thus
$$(AB)^nA-B(AB)^n\,=\,(V\,D^n\,V^{-1})A-B(V\,D^n\,V^{-1})\,=$$
$$=\,2\,\frac{\sin(2n+1)\theta}{\sin\theta}\,\,e^{i\,\frac{(2n+1)(\pi+\alpha)}{2}}\,\sin\frac{\alpha}{2}\,\,\left(\begin{array}{ccc}-1 &\lambda \\ -\lambda &1\end{array}\right)\,=$$
$$=\,2\,U_{2n}(\cos\theta)\,e^{i\,\frac{(2n+1)(\pi+\alpha)}{2}}\,\sin\frac{\alpha}{2}\,\,M\,=\,2\,U_{2n}(\Lambda)\,e^{i\,\frac{(2n+1)(\pi+\alpha)}{2}}\,\sin\frac{\alpha}{2}\,\,M,$$
with the matrix
$$
M = \left(\begin{array}{ccc}-1 &\lambda \\ -\lambda &1\end{array}\right)
$$
as the present Lemma claims.
\end{proof}

The main theorem of the section follows:
\begin{theorem} \label{tor_knot}
The cone-manifold $\mathbb{T}_n(\alpha)$, $n\geq 1$ is spherical if
$$\frac{2n-1}{2n+1}\,\pi < \alpha< 2\pi - \frac{2n-1}{2n+1}\,\pi.$$
The length of its singular geodesic (i.e. the length of the knot $\mathbb{T}_n$) equals
$$l_{\alpha}\,=\,(2n+1)\,\alpha\,-\,(2n-1)\,\pi.$$
The volume of $\mathbb{T}_n(\alpha)$ is
$$ {\rm \mathbb{V}ol}\,\mathbb{T}_n(\alpha)\,=\,\frac{1}{2n+1}\,\left(\frac{2n+1}{2}\,\alpha\,-\,\frac{2n-1}{2}\,\pi\right)^2.$$
\end{theorem}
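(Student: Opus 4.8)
The plan is to produce the spherical structure by an explicit holonomy representation of $\pi_1(\mathbb{S}^3\setminus\mathbb{T}_n)$ into the isometry group of the projective model $\mathbb{S}^3_\lambda$, and then to read off the length of the singular geodesic and the volume from that representation. As the two-bridge knot $(2n+1)/1$, the group $\pi_1(\mathbb{S}^3\setminus\mathbb{T}_n)$ has the presentation $\langle a,b\mid (ab)^na=b(ab)^n\rangle$ with $a,b$ a pair of meridians. I would set $\rho(a)=A$ and $\rho(b)=B$, the matrices of Lemma~\ref{isometry_lemma1}; by that lemma these are isometries of $\mathbb{S}^3_\lambda$, and since each has eigenvalues $1$ and $e^{i\alpha}$ it is a rotation through the angle $\alpha$ about a geodesic, which is exactly the local model demanded along a singular locus of cone angle $\alpha$. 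For $\rho$ to be a homomorphism the relator must be killed, i.e. $(AB)^nA=B(AB)^n$; by Lemma~\ref{polynomial_lemma1} the difference of the two sides is $2\,U_{2n}(\Lambda)\,e^{i(2n+1)(\pi+\alpha)/2}\sin\tfrac\alpha2\,M$ with $M\neq0$ and all scalar factors nonzero for $0<\alpha<2\pi$, so the relation holds precisely when $U_{2n}(\Lambda)=0$, that is $\Lambda=\lambda\sin\tfrac\alpha2=\cos\tfrac{k\pi}{2n+1}$ for some $k\in\{1,\dots,2n\}$.

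Among these roots I would take $\Lambda=\cos\tfrac{\pi}{2n+1}$, i.e. $\lambda(\alpha)=\cos\tfrac{\pi}{2n+1}\big/\sin\tfrac\alpha2$. This is the branch singled out geometrically: it is the one for which the associated developing map is an honest local isometry defining a cone-metric on $\mathbb{S}^3$ with singular set $\mathbb{T}_n$ — equivalently the branch along which the family connects continuously, at the ends of the parameter interval, to the Euclidean (Nil) cone-structure and which contains the classically known spherical orbifolds such as $3_1(2\pi/5)$ for $n=1$. Verifying that this representation really is the holonomy of a genuine spherical cone-structure, and not merely an abstract homomorphism with the right local behaviour, is in my view the main obstacle in the \emph{if} statement; I would handle it, as in the cited works \cite{HLM,MR2,Po,DMM}, by deforming from such a base structure and invoking the corresponding rigidity. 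Granting this, the only remaining constraint is admissibility of the parameter, $-1<\lambda<1$; since $\sin\tfrac\alpha2>0$ on $(0,2\pi)$ this reads $\sin\tfrac\alpha2>\cos\tfrac{\pi}{2n+1}=\sin\tfrac{(2n-1)\pi}{2(2n+1)}$, and because $\tfrac\alpha2\in(0,\pi)$ it is equivalent to $\tfrac{2n-1}{2n+1}\pi<\alpha<2\pi-\tfrac{2n-1}{2n+1}\pi$, exactly the claimed domain of sphericity.

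For the length, put $\theta=\tfrac{\pi}{2n+1}$. From the diagonalisation $V^{-1}(AB)V=D$ found in the proof of Lemma~\ref{polynomial_lemma1}, $AB$ has eigenvalues $-e^{i(\alpha\pm2\theta)}$, whence $(AB)^{2n+1}=-e^{i(2n+1)\alpha}I=e^{i(\pi+(2n+1)\alpha)}I$; this is just the statement that the central element $z=(ab)^{2n+1}$ of the torus-knot group (equal to the square of the half-twist $\Delta=(ab)^na$) maps to a scalar matrix. The Seifert-framed longitude of $\mathbb{T}_n$ is $\ell=z\,a^{-2(2n+1)}$, so $\rho(\ell)=e^{i(\pi+(2n+1)\alpha)}A^{-2(2n+1)}$; here $A^{-2(2n+1)}$ fixes the axis of $A$ pointwise while the scalar factor is a Clifford translation that shifts every point of that axis — which is the developed image of the singular geodesic — a distance $\pi+(2n+1)\alpha$. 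Hence the length of the singular geodesic equals $\pi+(2n+1)\alpha$ modulo $2\pi$, and resolving the ambiguity by continuity from the Euclidean limit, where the length must vanish as $\alpha$ decreases to $\tfrac{2n-1}{2n+1}\pi$, picks out $l_\alpha=(2n+1)\alpha-(2n-1)\pi$.

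Finally, the volume follows from the Schläfli formula for spherical cone-manifolds, $\dfrac{d}{d\alpha}\,{\rm \mathbb{V}ol}\,\mathbb{T}_n(\alpha)=\tfrac12\,l_\alpha$. Integrating gives ${\rm \mathbb{V}ol}\,\mathbb{T}_n(\alpha)=\tfrac1{2n+1}\bigl(\tfrac{2n+1}2\alpha-\tfrac{2n-1}2\pi\bigr)^2+C$, and the constant is $C=0$ because the spherical volume collapses to $0$ as $\alpha$ decreases to $\tfrac{2n-1}{2n+1}\pi$ (the form $\mathcal H$ degenerating as $\lambda\to1$), in agreement with $l_\alpha\to0$ there. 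As a check, for $n=1$ and $\alpha=2\pi/5$ the formula returns $\pi^2/300={\rm Vol}(\mathbb{S}^3)/600$, consistent with $3_1(2\pi/5)$ being $5$-fold covered by the Poincar\'e homology sphere $\mathbb{S}^3/2I$.
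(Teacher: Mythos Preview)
Your framework matches the paper's: holonomy $\rho(a)=A$, $\rho(b)=B$ into ${\rm Isom}\,\mathbb{S}^3_\lambda$, use of Lemma~\ref{polynomial_lemma1} to reduce the relator to $U_{2n}(\Lambda)=0$, selection of the root $\Lambda=\cos\frac{\pi}{2n+1}$, the derivation of the $\alpha$-interval from $0<\lambda<1$, and the Schl\"afli integration for the volume. Your computation of $l_\alpha$ via the central element $(AB)^{2n+1}=-e^{i(2n+1)\alpha}I$ and the Seifert longitude is correct and is a genuinely different, more algebraic route than the paper's: there the length is read off geometrically by locating the midpoints $N=(e^{i\varepsilon(2n+1)/2},0)$ and $S=(0,e^{i\varepsilon(2n+1)/2})$ of the two singular arcs and computing $\cos\frac{l_\alpha}{4}=\langle P_1,N\rangle$. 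Your argument is cleaner here; the paper's has the advantage of fitting into the polyhedron it has already built.

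The substantive gap is exactly where you flag ``the main obstacle'': you do not establish that the representation is the holonomy of an actual spherical cone structure, and your proposed appeal to deformation/rigidity from a base point is left unspecified. The paper does not proceed that way. Its proof is largely devoted to an explicit fundamental polyhedron $\mathcal{P}_n$ with vertices $P_1,\dots,P_{4n+2}$ and poles $N,S$, and to verifying five conditions (a)--(e) that make $\mathcal{P}_n$ ``proper'' so that a Poincar\'e-type argument applies. In particular the root $k=1$ is pinned down not by the Euclidean limit but by the known spherical orbifold at $\alpha=\pi$ (Hodgson--Rubinstein), where the angle sums $\sum\psi_i$ and $\sum\phi_i$ can be evaluated directly; continuity in $\alpha$ then propagates (c) and (d). The most technical step is (e), non-degeneracy and coherent orientation of the tetrahedra $NSP_iP_{i+1}$, which the paper checks by explicit Gram-determinant formulas $\Delta^{(j)}_k(\beta)=T^2_{L_j}(\cos\frac\beta4)-U^2_{\bullet}(\cos\theta)\sin^2\frac\beta2$ and a monotonicity argument on $[-2\theta,2\theta]$. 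None of this is in your proposal, and without it (or a precise citation to an existing result covering the full cone-angle interval, not just orbifold angles) the ``if'' direction is incomplete.
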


\begin{proof}.
The fundamental group of the knot $\mathbb{T}_n$ is presented as
$$\pi_1(\mathbb{S}^3\backslash\mathbb{T}_n)=\langle a,b|(ab)^na\,=\,b(ab)^n \rangle,$$
with generators $a$ and $b$ as at Fig.~\ref{Fig1}.

Since the cone-manifold $\mathbb{T}_n(\alpha)$ admits a spherical structure, then there exists a holonomy mapping \cite{T2}, that is a homomorphism
$$h: \pi_1(\mathbb{S}^3\backslash\mathbb{T}_n) \longmapsto {\rm Isom}\,\,\mathbb{S}^3_{\lambda}.$$
We will choose $h$ in respect with geometric construction of the cone-manifold.

All the further computations to find the length of the knot $\mathbb{T}_n$ and the volume of the cone-manifold $\mathbb{T}_n(\alpha)$ are performed making use of the corresponding fundamental polyhedron $\mathcal{P}_n$ (Fig.~\ref{Fig2}). The construction algorithm for the polyhedron is given in \cite{MR1}.

The combinatorial polyhedron $\mathcal{P}_n$ has vertices $P_i$, $i\in\{1,\ldots,4n+2\}$ and edges $P_iP_{i+1}$, $i\in\{1,\ldots,4n+2\}$, with $P_{4n+3}=P_1$, also $P_1P_{2n+2}$ and $P_2P_{2n+3}$. Let $N$, $S$ denote the middle points (the North and the South poles of $\mathcal{P}_n$) on the edges $P_1P_{2n+2}$ and $P_2P_{2n+3}$, respectively. Then, consider also edges $NP_i$, $SP_i$, $i\in\{1,\ldots,4n+2\}$.

Without loss in generality, choose the holonomy representation such that
$$h(a) = A,\,\, h(b)=B,$$
where $A$ and $B$ are matrices from Lemma \ref{isometry_lemma1}.

The generators of the fundamental group for $\mathbb{T}_n$ under the holonomy mapping $h$ correspond to isometries acting on $\mathcal{P}_n$. These isometries identify its faces by means of rotation about the edge $P_1P_{2n+2}$ for the top ``cupola'' of $\mathcal{P}_n$ and rotation about $P_2P_{2n+3}$ for the bottom one (see, Fig.~\ref{Fig2}). Then the edges $P_1P_{2n+2}$ and $P_2P_{2n+3}$ knot itself to produce $\mathbb{T}_n$ (cf.~\cite{MR1,Min}).

In order to construct the polyhedron $\mathcal{P}_n$ assume that its edge $P_1P_2$ is given by
$$ P_1=(1,0),\,P_2=(0,1).$$
Then one has
$$ \cos d_{\lambda}(P_1,P_2) = \langle P_1,P_2 \rangle = \lambda, $$
i.e. the spherical distance between the points $P_1$ and $P_2$ can vary from $0$ to $\pi$. Thus, prescribing certain coordinates to the end-points of the edge $P_1P_2$ we do not loss in generality of the consideration.

Note, that the axis of the isometry $A$ from Lemma \ref{isometry_lemma1} contains $P_1$ and the axis of $B$ contains $P_2$. The aim of the construction for the polyhedron $\mathcal{P}_n$ is to bring its edges $P_1P_{2n+2}$ and $P_2P_{2n+3}$ to be axes of the respective isometries $A$ and $B$. The other vertices $P_i$ has to be images of $P_1$ and $P_2$ under action of $A$ and $B$.

\begin{figure}[ht]
\begin{center}
\includegraphics* [totalheight=8cm]{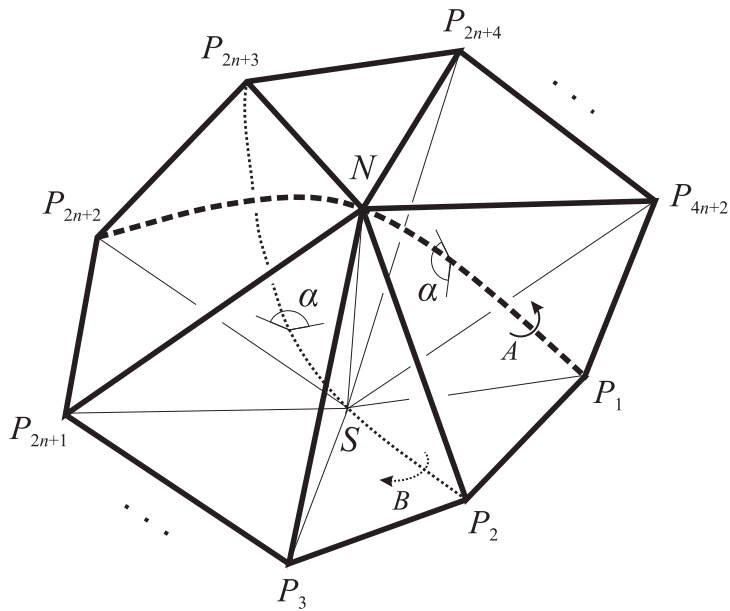}
\end{center}
\caption{Fundamental polyhedron $\mathcal{P}_n$ for $\mathbb{T}_n(\alpha)$} \label{Fig2}
\end{figure}

The polyhedron $\mathcal{P}_n$ is said to be proper if
\begin{list}{}{}
\item (a) inner dihedral angles along $P_1P_{2n+2}$ and $P_2P_{2n+3}$ are equal to $\alpha$;
\item (b) the following curvilinear faces are identified by $A$ and $B$:
$$A : NP_1P_2\ldots P_{2n+2} \rightarrow NP_1P_{4n+2}\ldots P_{2n+3}P_{2n+2}, $$
$$B : SP_2P_1P_{4n+2}\ldots P_{2n+3} \rightarrow SP_2P_3\ldots P_{2n+3}; $$
\item (c) sum of the inner dihedral angles $\psi_i$ along $P_iP_{i+1}$, $i\in~\{1,\ldots,4n+1\}$ equals $2\pi$;
\item (d) sum of the dihedral angles $\phi_i$ for corresponding tetrahedra $NSP_iP_{i+1}$, $i\in\{1,\ldots,4n+1\}$ at their common edge $NS$ is $2\pi$;
\item (e) all the tetrahedra $NSP_iP_{i+1}$ with $i\in\{1,\ldots,4n+2\}$, $P_{4n+3}=P_1$ are non-degenerated and coherently oriented.
\end{list}

By the orientation of a tetrahedron $NSP_iP_{i+1}$ one means the sign of the Gram determinant $\det(S, N, P_i, P_{i+1})$ for corresponding quadruple $S$, $N$, $P_i$, $P_{i+1}\in \mathbb{C}^2_\mathbb{R}$, where $i\in\{1,\ldots,4n+2\}$, $P_{4n+3}=P_1$. A tetrahedron is non-degenerated if $\det(S, N, P_i, P_{i+1})\neq0$. Thus, claim (e) is satisfied if all the Gram determinants are non-zero and of the same sign.

If $\alpha = \frac{2\pi}{m}$, $m\in\mathbb{N}$, then due to the Poincar\'e Theorem \cite[Theorem~13.5.3]{Ra} claims (a)~-- (e) imply that the group generated by the isometries $A$ and $B$ is discreet and its presentation is
$$
\Gamma = \langle A,B | (AB)^nA = B(AB)^n, A^m = B^m = {\rm id} \rangle.
$$

The metric space $\mathbb{S}^3_{\lambda}/\Gamma\,\cong\,\mathbb{T}_n(\frac{2\pi}{m})$ is a spherical orbifold, and $\mathcal{P}_n$ is its fundamental polyhedron. If $m\notin\mathbb{N}$ then the group generated by $A$ and $B$ might be non-discreet. However, the identification for the faces of $\mathcal{P}_n$ is of the same fashion as if it were $m\in\mathbb{N}$ and as the result one obtains the cone-manifold~$\mathbb{T}_n(\alpha)$.

By means of Lemma \ref{isometry_lemma1} and construction of $\mathcal{P}_n$ claims (a) and (b) are satisfied.

For the holonomy mapping $h$ to exist the following relation should be satisfied:
$$ h((ab)^na)-h(b(ab)^n)\,=\,(AB)^nA-B(AB)^n\,=\,0. $$

By Lemma \ref{polynomial_lemma1}, the condition above is satisfied if and only if
$$ U_{2n}(\Lambda)=0, $$
where $\Lambda=\lambda\sin\frac{\alpha}{2}\,$.

Thus, the parameter $\lambda$ of the metric ${\rm d}s_{\lambda}^2$ is determined completely by a root  of the polynomial $U_{2n}(\Lambda)$. From the above formula, $\lambda$ is related to the cone angle $\alpha$ by means of the equality 
$$\lambda\,=\,\frac{\Lambda}{\sin\frac{\alpha}{2}}\,.$$

The roots of $U_{2n}(\Lambda)$ are given by the following formula:

$$\Lambda_{k}\,=\,\cos\frac{k\pi}{2n+1}\,,$$
with $k\in\{1,\ldots,2n\}$.

The parameter $\lambda$ for the metric ${\rm d}s_{\lambda}^2$ has to be chosen in order the polyhedron $\mathcal{P}_n$ be proper and the metric itself be spherical.

Note, that the edges $P_iP_{i+1}$, $i\in\{1,\ldots,4n+2\}$, $P_{4n+3}=P_1$ are equivalent under action of the group $\Gamma=\langle A,B \rangle$. Thus, the relation $(AB)^nA = B(AB)^n$ implies the equality
$$ \sum^{2(2n+1)}_{i=1}\psi_{i}\,=\,2k\pi,$$
where $k$ is an integer.

Show that one can choose $\lambda$ for the equality $k=1$ to hold for all $\alpha$ in the affirmation of the Theorem. Due to the paper \cite{HR}, every two-bridge knot cone-manifold with cone angle $\pi$ is a spherical orbifold. In this case all the vertices $P_i$ of the fundamental polyhedron belong to the same circle and all the dihedral angles $\psi_{i}$ and $\phi_i$ are equal to each other \cite{MR1}:
$$ \phi_i = \psi_i = \frac{\pi}{2n+1}\,. $$

As far as $\cos d_{\lambda}(N,S)\,=\,\cos d_{\lambda}(P_i,P_{i+1})\,=\,\lambda$, then in case $\alpha=\pi$ one obtains
$$\lambda = \frac{\Lambda_k}{\sin\frac{\pi}{2}} = \cos\theta$$
for certain $k\in\{1,\ldots,2n\}$ and then
$$\sum^{2(2n+1)}_{i=1}\psi_{i}\,=\,2(2n+1)\theta.$$

Using the formula for the roots of $U_{2n}(\Lambda)$ obtain that
$$\sum^{2(2n+1)}_{i=1}\psi_{i}\,=\,2k\pi$$
if $\alpha=\pi$.
Thus, claim (c) for the polyhedron $\mathcal{P}_n$ with $\alpha=\pi$ is satisfied if $k=1$. As far as the parameter $\alpha$ varies continuously and sum of the angles $\psi_i$ represents a multiple of $2\pi$, one has that
$$\sum^{2(2n+1)}_{i=1}\psi_{i}\,=\,2\pi$$
for all $\alpha$.

By analogy, show that with
$$\lambda = \frac{\Lambda_1}{\sin\frac{\alpha}{2}}$$
the equality
$$\sum^{2(2n+1)}_{i=1}\phi_{i}\,=\,2\pi$$
holds, that means claim (d) is also satisfied.

Verify that under conditions of the Theorem the metric ${\rm d}s_{\lambda}^2$ is spherical. This claim is equivalent to the inequality
$$-1<\lambda<+1.$$
Note, that for
$$\frac{2n-1}{2n+1}\,\pi<\alpha<2\pi-\frac{2n-1}{2n+1}\,\pi$$
it follows
$$\sin\frac{\alpha}{2}\,>\,\sin\frac{(2n-1)\pi}{2(2n+1)}\,.$$
As far as $\sin\frac{\alpha}{2}>0$ and $\Lambda_1\,=\,\sin\frac{(2n-1)\pi}{2(2n+1)}>0$, one has
$$0<\lambda<1.$$

By analogy with Lemma \ref{isometry_lemma1} verify that
$$ C = \left(\begin{array}{ccc}0&1 \\1&0\end{array}\right) $$
is an isometry of ${\rm d}s_{\lambda}^2$.

Fixed point sets of $A$ and $B$ in $\mathbb{S}^3_{\lambda}$ are circles
$${\rm Fix}\,A = \{(z_1,0): z_1\in\mathbb{C}, |z_1|=1\}$$
and
$${\rm Fix}\,B = \{(0,z_2): z_2\in\mathbb{C}, |z_2|=1\},$$
correspondingly. The geometric meaning of $C$ is that it maps the first fixed circle to the other. Thus, the relation $B = C A C^{-1}$ holds.

The following equalities
$$P_{2k+1}\,=\,P_1(AB)^k,\,k\in\{0,\ldots,n\},$$
$$P_{2k}\,=\,P_2(AB)^{k-1},\,k\in\{1,\ldots,n+1\};$$
and
$$P_{2k+1}\,=\,P_1(BA)^{2n-k+1},\,k\in\{n+1,\ldots,2n\},$$
$$P_{2k}\,=\,P_2(BA)^{2n-k+2},\,k\in\{n+2,\ldots,2n+1\},$$
follow from the identification scheme of the edges of $\mathcal{P}_n$.

Define the auxiliary function
$${\varepsilon}(m) = \frac{m}{2}\,\alpha - \frac{4n-m}{2}\,\pi.$$

By analogy with the proof of Lemma~\ref{polynomial_lemma1} it follows that
$$(AB)^k\,=\,C(BA)^kC^{-1}\,=$$
$$=\,\left(\begin{array}{ccc}-\frac{\sin(2k-1)\theta}{\sin\theta}\,e^{i\,\varepsilon(2k)} &-\frac{\sin 2k\theta}{\sin\theta}\,e^{i\,\varepsilon(2k-1)} \\\frac{\sin2k\theta}{\sin\theta}\,e^{i\,\varepsilon(2k+1)} &\frac{\sin(2k+1)\theta}{\sin\theta}\,e^{i\,\varepsilon(2k)} \end{array}\right),$$
where $\theta=\frac{\pi}{2n+1}$.

Suppose $N$ and $S$ to be middle-points of the edges $P_1P_{2n+2}$ and $P_2P_{2n+3}$, respectively. Then
$$N = (e^{i\,\frac{\varepsilon(2n+1)}{2}},\,0),\,\,S =  (0,\,e^{i\,\frac{\varepsilon(2n+1)}{2}}).$$

For the lengths $l_{\alpha}$ of the singular geodesic one has
$$\cos\frac{l_{\alpha}}{4} = \langle P_1,N \rangle = \langle P_1 C, N C \rangle = \langle P_2,S \rangle.$$
Thus
$$
\cos\frac{l_{\alpha}}{4}\,=\,\cos\frac{(2n+1)\alpha-(2n-1)\pi}{4}\,.
$$

By construction of the polyhedron $\mathcal{P}_n$, the inequality $0<l_{\alpha}<4\pi$ holds. Then it follows 
$$l_{\alpha}=(2n+1)\alpha-(2n-1)\pi.$$

Given the coordinates of the vertices $P_i$ and the poles $N$ and $S$ of the polyhedron $\mathcal{P}_n$, verify claim (e).

For every four points $A,B,C,D \in \mathbb{C}^2_\mathbb{R}$, where
$$A = (A_1,A_2),\,\,B = (B_1,B_2),\,\,C = (C_1,C_2),\,\,D = (D_1,D_2), $$
their Gram determinant is
$$
\det(A,B,C,D) := \det \left(\begin{array}{cccc} {\rm Re}\,A_1 &{\rm Im}\,A_1 &{\rm Re}\,A_2 &{\rm Im}\,A_2\\
{\rm Re}\,B_1 &{\rm Im}\,B_1 &{\rm Re}\,B_2 &{\rm Im}\,B_2\\
{\rm Re}\,C_1 &{\rm Im}\,C_1 &{\rm Re}\,C_2 &{\rm Im}\,C_2\\
{\rm Re}\,D_1 &{\rm Im}\,D_1 &{\rm Re}\,D_2 &{\rm Im}\,D_2\end{array}\right).
$$

Each tetrahedron $NSP_{i}P_{i+1}$ with $i\in\{1,\ldots,2n+1\}$ is isometric to $NSP_{2n + i + 1}P_{2n + i + 2}$, $i\in\{1,\ldots,2n+1\}$, $P_{4n+3} = P_1$ by means of the isometry $C$ defined above. Thus, we consider only the tetrahedra $NSP_{i}P_{i+1}$ with $i\in\{1,\ldots,2n+1\}$.  Split them into two groups: the tetrahedra $NSP_{2k+1}P_{2k+2}$ with $k\in\{0,\ldots,n\}$ and the tetrahedra $NSP_{2k}P_{2k+1}$ with $k\in\{1,\ldots,n\}$.

Substitute $\alpha=\beta+\pi$ and proceed with straightforward calculations:
$$\Delta^{(1)}_{k}(\beta) = \det(S,N,P_{2k+1},P_{2k+2}) = \cos^2\frac{L_1\,\beta}{4} - U^2_{2k-1}(\cos\theta)\,\sin^2\frac{\beta}{2}=$$
$$=T^2_{L_1}(\cos\frac{\beta}{4}) - U^2_{2k-1}(\cos\theta)\,\sin^2\frac{\beta}{2},$$
where $k\in\{0,\ldots,n\}$, $L_1=|2n-4k+1|$, $\theta=\frac{\pi}{2n+1}$, $\beta\in[-2\,\theta,2\,\theta]$;
$$\Delta^{(2)}_{k}(\beta) = \det(S,N,P_{2k},P_{2k+1}) = \cos^2\frac{L_2\,\beta}{4} - U^2_{2k-2}(\cos\theta)\,\sin^2\frac{\beta}{2}=$$
$$=T^2_{L_2}(\cos\frac{\beta}{4}) - U^2_{2k-1}(\cos\theta)\,\sin^2\frac{\beta}{2},$$
where $k\in\{1,\ldots,n\}$, $L_2=|2n-4k+3|$, $\theta$ and $\beta$ the same as above. The first kind Chebyshev polynomial  of degree $k\geq0$ is denoted by $T_k$. Assume that
$$U_{-1}(\cos \theta) = 0,\,\,\,U_{0}(\cos \theta) = 1$$ for the sake of brevity.

All the functions $\Delta^{(j)}_k(\beta)$, $j\in\{1,2\}$ are even on the interval $[-2\theta,2\theta]$. Then one considers them only on the interval $[0,2\theta]$. Note, that the polynomial $T^2_{L_j}(\cos\beta)$ monotonously decreases and the function $\sin^2\frac{\beta}{2}$ monotonously increases with $\beta\in[0,2\theta]$. Moreover, $T^2_{L_j}(\cos 0)=T^2_{L_j}(1)=1$. Then it follows that $\Delta^{(j)}_k(\beta) > 0$ with $\beta\in(-2\theta,2\theta)$. Also, one has $\Delta^{(j)}_k(\pm2\,\theta) = 0$.

Then for all $\beta\in(-2\theta,2\theta)$ (i.e. for all $\alpha$ in the affirmation of the Theorem)
$$ \det(S,N,P_i,P_{i+1}) > 0 $$
where $i\in\{1,\ldots,4n+2\}$, $P_{4n+3}=P_1$.
Thus, claim (e) for the polyhedron $\mathcal{P}_n$ is satisfied.

Use the Schl\"afli formula \cite{Ho} to obtain the volume formula for $\mathbb{T}_n(\alpha)$. One has
$$ {\rm d}{\rm \mathbb{V}ol}\,\mathbb{T}_n(\alpha)\,=\,\frac{l_{\alpha}}{2}\,{\rm d}\alpha\,=\,\frac{(2n+1)\alpha - (2n-1)\pi}{2}\,{\rm d}\alpha. $$

Note, that ${\rm \mathbb{V}ol}\,\mathbb{T}_n(\alpha) \rightarrow 0$ with $\alpha \rightarrow \frac{2n-1}{2n+1}\,\pi$. In this case $d_{\lambda}(P_i,P_{i+1}) \rightarrow 0$, where $i\in\{1,\ldots,4n+2\}$, $P_{4n+3}=P_1$ and the fundamental polyhedron collapses to a point. Thus
$$ {\rm \mathbb{V}ol}\,\mathbb{T}_n(\alpha)\,=\,\frac{1}{2n+1}\,\left(\frac{2n+1}{2}\,\alpha\,-\,\frac{2n-1}{2}\,\pi\right)^2.$$
\end{proof}

\begin{remark}
The domain of the spherical metric existence in Theorem \ref{tor_knot} was indicated before in \cite[Proposition~2.1]{Po}.
\end{remark}

\section{Torus links $\mathbb{L}_n$}

Let $\mathbb{L}_n, n\geq2$ be a torus link ${\rm t}(2n,2)$ with two components. The corresponding link in the rational census is $2n/1$ (Fig.~\ref{Fig3}). The fundamental group of $\mathbb{L}_n$ is presented as
$$\pi_1(\mathbb{S}^3\backslash\mathbb{L}_n) = \langle a,b|(ab)^n\,=\,(ba)^n \rangle.$$

\begin{figure}[ht]
\begin{center}
\includegraphics* [totalheight=4cm]{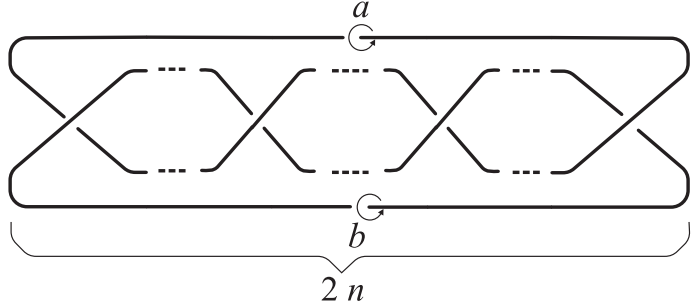}
\end{center}
\caption{Link $2n/1$} \label{Fig3}
\end{figure}

Let $\mathbb{L}_n(\alpha,\,\beta)$ denote a cone-manifold with singular locus the link $\mathbb{L}_n$ and the cone angles $\alpha$, $\beta$ along its components.

For every $\alpha,\beta\in(0,2\pi)$ and $\lambda\in(-1,+1)$,
we denote
$$A=\left(\begin{array}{ccc}1 & 0  \\-2\,i\,e^{i \frac{\alpha}{2}}\lambda\,\sin\frac{\alpha}{2} &e^{i\alpha} \end{array}\right)$$ and
$$B=\left(\begin{array}{ccc}e^{i\beta}& -2\,i\,e^{i \frac{\beta}{2}}\lambda\,\sin\frac{\beta}{2}  \\ 0 & 1 \end{array}\right).$$
By Lemma~\ref{isometry_lemma1}, linear transformations $A$ and $B$ are isometries of $\mathbb{S}^3_{\lambda}$.

\begin{lemma} \label{polynomial_lemma2}
For every integer $n\geq2$ the following equality holds
$$
(AB)^n-(BA)^n\,=\,4\,U_{n-1}(\Lambda)\,\,\lambda\,\,e^{i(\frac{\alpha+\beta}{2}+\pi)\,n}\,\sin\frac{\alpha}{2}\sin\frac{\beta}{2}\,\,M,
$$
where $M$ is a non-zero $2\times2$ matrix and $U_{n-1}(\Lambda)$ is the second kind Chebyshev polynomial of degree $n-1$ in variable $$\Lambda\,=\,(1-\lambda^2)\cos\frac{\alpha-\beta}{2}\,+\,\lambda^2 \cos\frac{\alpha+\beta}{2}\,.$$
\end{lemma}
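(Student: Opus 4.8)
The plan is to follow the pattern of Lemma~\ref{polynomial_lemma1}: reduce the $n$-th power of a $2\times2$ matrix to a linear combination of the matrix and the identity, so that in the difference $(AB)^n-(BA)^n$ the identity part cancels and only a multiple of the commutator $AB-BA$ remains.

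First I would multiply out the two triangular matrices. Since $A$ is lower triangular with diagonal $(1,e^{i\alpha})$ and $B$ is upper triangular with diagonal $(e^{i\beta},1)$, one has $\det(AB)=\det(BA)=e^{i(\alpha+\beta)}$ immediately, while
$$AB=\left(\begin{array}{cc} e^{i\beta} & -2ie^{i\beta/2}\lambda\sin\tfrac{\beta}{2} \\ -2ie^{i(\alpha/2+\beta)}\lambda\sin\tfrac{\alpha}{2} & e^{i\alpha}-4e^{i(\alpha+\beta)/2}\lambda^2\sin\tfrac{\alpha}{2}\sin\tfrac{\beta}{2}\end{array}\right).$$
Reading off the trace and simplifying via $e^{i\alpha}+e^{i\beta}=2e^{i(\alpha+\beta)/2}\cos\tfrac{\alpha-\beta}{2}$ and $2\sin\tfrac{\alpha}{2}\sin\tfrac{\beta}{2}=\cos\tfrac{\alpha-\beta}{2}-\cos\tfrac{\alpha+\beta}{2}$ gives $\tr(AB)=2\Lambda\, e^{i(\alpha+\beta)/2}$, where $\Lambda$ is exactly the quantity in the statement; since the trace is cyclic, $\tr(BA)=\tr(AB)$ as well.

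Next, for an arbitrary $2\times2$ matrix $X$, Cayley--Hamilton gives $X^2=\tr(X)\,X-\det(X)\,I$, hence $X^n=a_nX+b_nI$ with $a_1=1$, $a_2=\tr X$ and $a_n=\tr(X)\,a_{n-1}-\det(X)\,a_{n-2}$. Substituting $\tr X=2\Lambda e^{i(\alpha+\beta)/2}$, $\det X=e^{i(\alpha+\beta)}$ and comparing with the Chebyshev recursion $U_{n-1}=2\Lambda U_{n-2}-U_{n-3}$ yields, by induction, $a_n=e^{i(n-1)(\alpha+\beta)/2}\,U_{n-1}(\Lambda)$ (equivalently one may diagonalize $AB$ as in Lemma~\ref{polynomial_lemma1}: its eigenvalues are $e^{i(\alpha+\beta)/2\pm i\theta}$ with $\Lambda=\cos\theta$, so that $a_n=e^{i(n-1)(\alpha+\beta)/2}\sin n\theta/\sin\theta$). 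Because $AB$ and $BA$ have the same trace and determinant they have the same coefficients $a_n,b_n$, so the $b_nI$ term drops out:
$$(AB)^n-(BA)^n=a_n\,(AB-BA)=e^{i(n-1)(\alpha+\beta)/2}\,U_{n-1}(\Lambda)\,(AB-BA).$$

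Finally I would compute the commutator directly from the two products; it simplifies (using $1-e^{i\alpha}=-2ie^{i\alpha/2}\sin\tfrac{\alpha}{2}$ and $e^{i\beta}-1=2ie^{i\beta/2}\sin\tfrac{\beta}{2}$) to
$$AB-BA=4\,\lambda\,e^{i(\alpha+\beta)/2}\sin\tfrac{\alpha}{2}\sin\tfrac{\beta}{2}\left(\begin{array}{cc}\lambda & -1 \\ 1 & -\lambda\end{array}\right),$$
a non-zero matrix. Inserting this into the previous display and absorbing $e^{i(n-1)(\alpha+\beta)/2}e^{i(\alpha+\beta)/2}=e^{in(\alpha+\beta)/2}$ and the sign $e^{in\pi}=(-1)^n$ into the matrix, one obtains the asserted identity with $M$ equal to $(-1)^n$ times the matrix above; this $M$ is non-zero. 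None of this is really hard; the one point requiring a word of care is that the formula for $a_n$ is used without assuming $|\Lambda|<1$, which is legitimate because $a_n$ is by construction a polynomial in $\tr X$ and $\det X$, so the equality $a_n=e^{i(n-1)(\alpha+\beta)/2}U_{n-1}(\Lambda)$ is a polynomial identity in $\lambda,\alpha,\beta$ (alternatively one checks $-1<\Lambda<1$ on the relevant parameter range, as in Lemma~\ref{polynomial_lemma1}). The main obstacle is therefore purely bookkeeping: carrying out the two matrix products and the trace and commutator simplifications without sign errors.
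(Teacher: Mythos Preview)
Your argument is correct. The paper's own proof is a one-liner (``By analogy with Lemma~\ref{polynomial_lemma1}''), and that analogy points to an explicit diagonalization: one finds a matrix $V$ with $V^{-1}(AB)V=D$ diagonal, then computes $VD^nV^{-1}-C(VD^nV^{-1})C^{-1}$ entrywise (here $BA=C(AB)C^{-1}$ with the swap matrix $C$). Your route via Cayley--Hamilton is a mild but genuine variation: instead of ever writing down eigenvectors, you use that any $2\times2$ power $X^n$ is $a_nX+b_nI$ with $(a_n,b_n)$ depending only on $\tr X$ and $\det X$, so that the difference $(AB)^n-(BA)^n$ collapses at once to $a_n(AB-BA)$; the Chebyshev polynomial then appears from the three-term recursion for~$a_n$. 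This organization is a little cleaner---it isolates exactly why the difference is a scalar multiple of the commutator and avoids carrying $V$ and $V^{-1}$ through the computation---while the paper's diagonalization makes the eigenvalues $e^{i(\alpha+\beta)/2\pm i\theta}$ (and hence the substitution $\Lambda=\cos\theta$) visible from the start. Your remark that the identity $a_n=e^{i(n-1)(\alpha+\beta)/2}U_{n-1}(\Lambda)$ is polynomial and therefore does not require $|\Lambda|<1$ is a nice touch that the diagonalization argument needs to address separately.
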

\begin{proof}.
By analogy with Lemma \ref{polynomial_lemma1}.
\end{proof}

With Lemma \ref{polynomial_lemma2} the main theorem of the section follows:
\begin{theorem} \label{tor_link}
The cone-manifold $\mathbb{L}_n(\alpha,\,\beta)$, $n\geq 2$ is spherical if
$$-2\pi\left(1-\frac{1}{n}\right)\,<\,\alpha\,-\,\beta\,<\,2\pi\left(1-\frac{1}{n}\right)\,,$$
$$2\pi\left(1-\frac{1}{n}\right)\,<\,\alpha\,+\,\beta\,<\,2\pi\left(1+\frac{1}{n}\right)\,.$$
The lengths $l_{\alpha}$, $l_{\beta}$ of its singular geodesics (i.e. lengths of the components for~$\mathbb{L}_n$) are equal to each other and
$$l_{\alpha}\,=\,l_{\beta}\,=\,\frac{\alpha+\beta}{2}\,n\,-\,\pi\,(n-1).$$
The volume of $\mathbb{L}_n(\alpha,\beta)$ is
$${\rm \mathbb{V}ol}\,\mathbb{L}_n(\alpha,\,\beta)\,=\,\frac{1}{2n}\,\left(\frac{\alpha+\beta}{2}\,\,n\,-\,(n-1)\pi\right)^2.$$
\end{theorem}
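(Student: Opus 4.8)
The plan is to mirror the proof of Theorem~\ref{tor_knot}, now carrying two cone angles through the same machinery. I would begin with the presentation $\pi_1(\mathbb{S}^3\backslash\mathbb{L}_n)=\langle a,b\mid (ab)^n=(ba)^n\rangle$, put the holonomy at $h(a)=A$, $h(b)=B$ with the matrices $A,B$ fixed just before Lemma~\ref{polynomial_lemma2}, and build an analogous fundamental polyhedron $\mathcal{P}_n$ following \cite{MR1}: its vertices $P_i$ are the images of $P_1=(1,0)$ and $P_2=(0,1)$ under the words $(AB)^k$ and $(BA)^k$, its poles $N,S$ sit at the midpoints of the two long edges destined to become the axes of $A$ and $B$, and those two long edges knot together into the two components of $\mathbb{L}_n$. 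For the holonomy to exist the relator must be respected, i.e.\ $(AB)^n-(BA)^n=0$, and by Lemma~\ref{polynomial_lemma2} this is equivalent to $U_{n-1}(\Lambda)=0$ with $\Lambda=(1-\lambda^2)\cos\frac{\alpha-\beta}{2}+\lambda^2\cos\frac{\alpha+\beta}{2}$. Thus $\Lambda$ must be one of the roots $\Lambda_k=\cos\frac{k\pi}{n}$, $k\in\{1,\dots,n-1\}$, and solving the defining relation for $\lambda^2$ gives
$$\lambda^2=\frac{\cos\frac{\alpha-\beta}{2}-\Lambda}{\cos\frac{\alpha-\beta}{2}-\cos\frac{\alpha+\beta}{2}}=\frac{\cos\frac{\alpha-\beta}{2}-\Lambda}{2\sin\frac{\alpha}{2}\sin\frac{\beta}{2}},$$
whose denominator is positive throughout $\alpha,\beta\in(0,2\pi)$.

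Next I would pin down the branch, namely $\Lambda=\Lambda_{n-1}=\cos\frac{(n-1)\pi}{n}=-\cos\frac{\pi}{n}$, exactly as in the knot case. At $\alpha=\beta=\pi$ the link cone-manifold is a spherical orbifold by \cite{HR}, all vertices $P_i$ lie equidistantly on one circle, all dihedral angles $\psi_i$ along the short edges $P_iP_{i+1}$ coincide and all angles $\phi_i$ along $NS$ coincide, with common value $\frac{\pi}{2n}$ (so that both angle sums equal $2\pi$) as in \cite{MR1}; since these angle sums depend continuously on $(\alpha,\beta)$ and are constrained to integer multiples of $2\pi$, they stay equal to $2\pi$ over the whole connected domain, which corresponds precisely to the choice $k=n-1$ (for which $\lambda=\cos\frac{\pi}{2n}$ at $\alpha=\beta=\pi$, in agreement with the common angle $\frac{\pi}{2n}$). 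Conditions (a), (b) of properness are automatic from Lemma~\ref{isometry_lemma1} and the construction; (c), (d) are the angle-sum statements just settled. For condition (e) I would diagonalise $AB$ by an auxiliary matrix $V$ exactly as in Lemma~\ref{polynomial_lemma1}, read off closed forms for the powers of $AB$ and $BA$ together with their phase factors, introduce a change of angular variable $t$ playing the role that $\beta=\alpha-\pi$ did for $\mathbb{T}_n$, and check that every Gram determinant $\det(S,N,P_i,P_{i+1})$ reduces to a difference $T^2_{L}\left(\cos\frac{t}{4}\right)-U^2_{m}\left(\Lambda_{n-1}\right)\sin^2\frac{t}{2}$ that is strictly positive on the open domain and vanishes on its boundary.

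With the branch fixed, sphericity of ${\rm d}s^2_\lambda$ is the inequality $0<\lambda^2<1$. Substituting $\Lambda=-\cos\frac{\pi}{n}$: positivity of $\lambda^2$ reads $\cos\frac{\alpha-\beta}{2}>-\cos\frac{\pi}{n}=\cos\left(\pi-\frac{\pi}{n}\right)$, which by monotonicity of cosine on $[0,\pi]$ (and $\left|\frac{\alpha-\beta}{2}\right|<\pi$) is exactly $|\alpha-\beta|<2\pi\left(1-\frac1n\right)$; the inequality $\lambda^2<1$ reads $\cos\frac{\alpha+\beta}{2}<-\cos\frac{\pi}{n}$, which is exactly $2\pi\left(1-\frac1n\right)<\alpha+\beta<2\pi\left(1+\frac1n\right)$. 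These two short computations give the domain claimed in the Theorem. For the singular lengths I would compute $\cos\frac{l_\alpha}{4}=\langle P_1,N\rangle$ and $\cos\frac{l_\beta}{4}=\langle P_2,S\rangle$ directly from the coordinates; using the closed forms for the powers of $AB$ one gets $N=(e^{i\varphi},0)$, $S=(0,e^{i\varphi})$ with $\varphi=\frac14\left(\frac{\alpha+\beta}{2}n-\pi(n-1)\right)$, so that $\cos\frac{l_\alpha}{4}=\cos\frac{l_\beta}{4}=\cos\varphi$, and the construction of $\mathcal{P}_n$ fixes the branch so that $l_\alpha=l_\beta=\frac{\alpha+\beta}{2}n-\pi(n-1)$. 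Finally the Schl\"afli formula \cite{Ho} gives
$${\rm d}\,{\rm \mathbb{V}ol}\,\mathbb{L}_n(\alpha,\beta)=\frac12\,l_\alpha\,{\rm d}\alpha+\frac12\,l_\beta\,{\rm d}\beta=\frac12\left(\frac{\alpha+\beta}{2}n-\pi(n-1)\right){\rm d}(\alpha+\beta);$$
integrating in the variable $\alpha+\beta$ and fixing the constant of integration by the collapse of $\mathcal{P}_n$ (all $d_\lambda(P_i,P_{i+1})\to0$, hence ${\rm \mathbb{V}ol}\to0$, as $\frac{\alpha+\beta}{2}n-\pi(n-1)\to0$) yields ${\rm \mathbb{V}ol}\,\mathbb{L}_n(\alpha,\beta)=\frac{1}{2n}\left(\frac{\alpha+\beta}{2}n-(n-1)\pi\right)^2$.

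The step I expect to be the main obstacle is verifying condition (e). With two independent angles the family of tetrahedra $NSP_iP_{i+1}$ is genuinely two-parameter, and reducing each Gram determinant to a clean difference of squares $T^2-U^2\sin^2$ with a single sign throughout the region demands careful bookkeeping of the phase factors occurring in the powers of $AB$ and $BA$ — the computation behind Lemma~\ref{polynomial_lemma2} is only asserted ``by analogy'', so the explicit closed forms must be written out in full here. A secondary difficulty is making the branch-selection argument rigorous over a two-dimensional domain: one has to confirm that the set of parameters admitting a spherical structure is connected, that the two angle sums vary continuously across it, and hence that the value $2\pi$ found at $(\pi,\pi)$ propagates everywhere, so that no other root $\Lambda_k$ can intervene.
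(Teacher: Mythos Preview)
Your plan is essentially the paper's own proof: same holonomy $h(a)=A$, $h(b)=B$, same fundamental polyhedron (the paper calls it $\mathcal{F}_n$), same branch $\Lambda=\Lambda_{n-1}=-\cos\frac{\pi}{n}$ selected at the reference point $\alpha=\beta=\pi$ via \cite{HR,MR1}, same continuity argument for the two angle sums, same formula for $\lambda^2$ (your explicit check that $0<\lambda^2<1$ cuts out exactly $\mathcal{D}$ is in fact more detailed than what the paper writes), and the same Schl\"afli integration with collapse on the boundary. One correction: in the link case each axis $P_1P_{2n+1}$, $P_2P_{2n+2}$ is a \emph{full} singular component rather than half of a single knot, so the midpoint relation is $\cos\frac{l_\alpha}{2}=\langle P_1,N\rangle$ and $\cos\frac{l_\beta}{2}=\langle P_2,S\rangle$ (not $\frac{l}{4}$ as you carried over from Theorem~\ref{tor_knot}), and correspondingly the pole phase is $\varphi=\frac12\bigl(\frac{\alpha+\beta}{2}\,n-\pi(n-1)\bigr)$; you should also record, as the paper does, that the extra solution $\lambda=0$ of the relator equation in Lemma~\ref{polynomial_lemma2} must be discarded because it forces $AB=BA$ and hence an abelian holonomy image.
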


\begin{proof}.
One continues the proof by analogy with Theorem \ref{tor_knot}.

Suppose that $\mathbb{L}_n(\alpha,\beta)$ is spherical. Then there exists a holonomy mapping~\cite{T2}:
$$h: \pi_1(\mathbb{S}^3\backslash\mathbb{L}_n) \longmapsto {\rm Isom}\,\,\mathbb{S}^3_{\lambda},$$
$$h(a) = A,\,\,h(b) = B.$$

\begin{figure}[ht]
\begin{center}
\includegraphics* [totalheight=8cm]{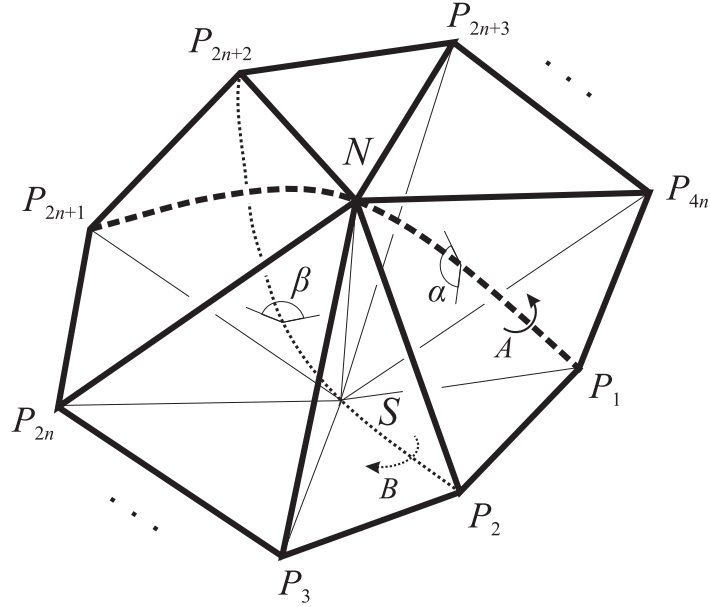}
\end{center}
\caption{The fundamental polyhedron $\mathcal{F}_n$ for $\mathbb{L}_n(\alpha,\beta)$} \label{Fig4}
\end{figure}

Also,
$$ h((ab)^n) - h((ba)^n) = (AB)^n - (BA)^n = 0.$$
By means of Lemma \ref{polynomial_lemma2} the equality above holds either if $\lambda=0$, or if
$$\Lambda\,=\,(1-\lambda^2)\cos\frac{\alpha-\beta}{2}\,+\,\lambda^2 \cos\frac{\alpha+\beta}{2} $$
is a root of the equation $U_{n-1}(\Lambda) = 0.$

In case $\lambda=0$ the image of $h$ is abelian, because of the additional relation
$AB=BA$. With $n\geq2$ this leads to a degenerate geometric structure. Thus, one has to choose the parameter $\lambda$ for the metric ${\rm d}s_{\lambda}^2$ using roots of the Chebyshev polynomial $U_{n-1}(\Lambda)$.

The fundamental polyhedron $\mathcal{F}_n$ for the cone-manifold $\mathbb{L}_n(\alpha,\,\beta)$ is depicted at Fig.~\ref{Fig4}. Suppose its vertices $P_1$ and $P_2$ to be
$$ P_1 = (1,0),\,P_2 = (0,1).$$
The axes of isometries $A$ and $B$ correspond to the edges $P_1P_{2n+1}$ and $P_2P_{2n+2}$. Points $N$ and $S$ are respective middles of the edges $P_1P_{2n+1}$ and $P_2P_{2n+2}$. Those are called North and South poles of the polyhedron.

The polyhedron $\mathcal{F}_n$ is said to be proper if
\begin{list}{}{}
\item (a) respective inner dihedral angles along the edges $P_1P_{2n+1}$ and $P_2P_{2n+2}$  are equal to $\alpha$ and $\beta$;
\item (b) curvilinear faces of the polyhedron are identified by $A$ and $B$:
$$A : NP_1P_2\ldots P_{2n+1} \rightarrow NP_1P_{4n}\ldots P_{2n+2}P_{2n+1}, $$
$$B : SP_2P_1P_{4n}\ldots P_{2n+2} \rightarrow SP_2P_3\ldots P_{2n+2}; $$
\item (c) sum of the inner dihedral angles $\psi_i$ along the edges $P_iP_{i+1}$, $i\in~\{1,\ldots,4n-1\}$ equals $2\pi$;
\item (d) sum of the dihedral angles $\phi_i$ for tetrahedra $NSP_iP_{i+1}$, $i\in\{1,\ldots,4n-1\}$ at their common edge $NS$ equals $2\pi$;
\item (e) all the tetrahedra $NSP_iP_{i+1}$ with $i\in\{1,\ldots,4n\}$, $P_{4n+1}=P_1$ are non-degenerated and coherently oriented.
\end{list}

In order to choose the parameter $\lambda$ for the corresponding metric consider the fundamental polyhedron $\mathcal{F}_n$ with $\alpha=\beta=\pi$. Then all its vertices belong to the same circle and all the dihedral angles $\psi_i$ of the tetrahedra $NSP_iP_{i+1}$ along the edges $P_iP_{i+1}$ are equal to $\psi = \frac{\pi}{2n}\,$ \cite{MR1}. Also the dihedral angles $\phi_i$ of the tetrahedra $NSP_iP_{i+1}$ along their common edge $NS$ are equal to each other:
$$ \phi_i = \phi = \frac{\pi}{2n}.$$

In this case $\lambda = \langle P_1,P_2 \rangle = \cos\phi$ and
$$
\Lambda\,=\,-\cos2\phi\,=\cos\frac{(n-1)\pi}{n}.
$$

All the roots of $U_{n-1}(\Lambda)$ are given by the formula
$$
\Lambda_k\,=\,\cos\frac{k\pi}{n}\,,\,\,k\in\{1,\dots,n-1\},
$$
so one choose the root $\Lambda_k$ with $k=n-1$. Then, by analogy with Theorem \ref{tor_knot}, equalities
$$\sum^{4n}_{i=1}\psi_{i}\,=\,2\pi$$
and
$$\sum^{4n}_{i=1}\phi_{i}\,=\,2\pi$$
are satisfied at the point $\alpha=\beta=\pi$ of the domain
$$ \mathcal{D} = \left\{(\alpha, \beta) : |\alpha-\beta|<2\pi\left(1-\frac{1}{n}\right), |\alpha+\beta-2\pi|<\frac{2\pi}{n}\right\},$$
depicted at Fig.~\ref{Fig5}.

In terms of the parameter $\lambda$, that defines the metric ${\rm d}s_{\lambda}^2$, one has
$$
\lambda^2\,=\,\frac{\cos\frac{\alpha-\beta}{2}+\cos\frac{\pi}{n}}{\cos\frac{\alpha-\beta}{2}-\cos\frac{\alpha+\beta}{2}}\,.
$$

As for all $(\alpha,\beta)\in\mathcal{D}$ the inequality $0<\lambda^2<1$ is satisfied, the metric ${\rm d}s_{\lambda}^2$ is spherical regarding the corresponding domain. By analogy with Theorem \ref{tor_knot} one can show that claims (a)~-- (d) for the polyhedron $\mathcal{F}_n$ are satisfied in the interior of $\mathcal{D}$.

\begin{figure}[t]
\begin{center}
\includegraphics [totalheight=6cm]{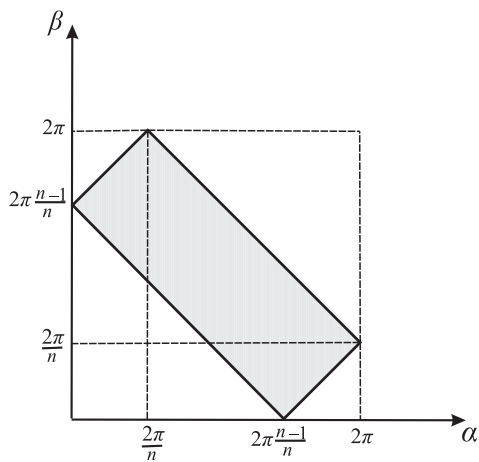}
\end{center}
\caption{The domain $\mathcal{D}$ of sphericity for $\mathbb{L}_n(\alpha,\beta)$} \label{Fig5}
\end{figure}

The lengths $l_{\alpha}$ and $l_{\beta}$ of singular geodesics for the cone-manifold $\mathbb{L}_n(\alpha,\beta)$ meet the relations
$$
\cos \frac{l_{\alpha}}{2} = \langle P_1,N \rangle,
$$
$$
\cos \frac{l_{\beta}}{2} = \langle P_2,S \rangle.
$$

By analogy with the proof of Theorem \ref{tor_knot} one obtains
$$
l_{\alpha}\,=\,l_{\beta}\,=\,\frac{\alpha+\beta}{2}\,n\,-\,\pi(n-1).
$$

Given the coordinates of the vertices for the fundamental polyhedron verify claim (e) for all $(\alpha,\beta)$ in the domain $\mathcal{D}$.

Make use of the Schl\"afli formula \cite{Ho} to obtain the volume of $\mathbb{L}_n(\alpha,\beta)$:
$$ {\rm d}\,{\rm \mathbb{V}ol}\,\mathbb{L}_n(\alpha,\beta)\,=\,\frac{l_{\alpha}}{2}\,{\rm d} \alpha \,+\,\frac{l_{\beta}}{2}\,{\rm d} \beta\,=\,\left(\frac{\alpha+\beta}{2}\,n\,-\,\pi(n-1)\right)\,{\rm d}\left(\frac{\alpha+\beta}{2}\right). $$
Note, that with
$$\alpha=\beta\rightarrow\pi\,\frac{n-1}{n}$$
the fundamental polyhedron $\mathcal{F}_n$ collapses to a point (i.e. the volume tends to~0). The last affirmation of the Theorem follows.
\end{proof}

\begin{remark}
Under condition $\alpha = \beta$ the inequality from the affirmation of Theorem \ref{tor_link} coincides with the inequality from \cite[Proposition 2.2]{Po}. \end{remark}

\begin{remark}
Note, that the lengths of the singular geodesics for $\mathbb{L}_n(\alpha,\beta)$ are equal even if $\alpha \neq \beta$.
\end{remark}

\flushleft{\emph{
Alexander Kolpakov\\
Novosibirsk State University\\
630090, Pirogova str., bld.~2\\
Novosibirsk, Russia\\}
\rm{kolpakov.alexander@gmail.com}
}
\medskip
\flushleft{\emph{
Alexander Mednykh\\
Sobolev Institute of Mathematics, SB RAS\\
630090, Koptyug avenue, bld.~4,\\
Novosibirsk, Russia\\
}
\rm{mednykh@math.nsc.ru}
}
\end{document}